%
\documentclass[runningheads]{llncs}

\usepackage{epstopdf}
\usepackage[caption=false]{subfig}



\usepackage{amsmath, amssymb}
\usepackage{mathtools}
\usepackage{graphicx}
\mathtoolsset{showonlyrefs}
\usepackage{xcolor}

\usepackage{hyperref}
\hypersetup{
    colorlinks=true,
    citecolor=blue,
    linkcolor=red, 
    filecolor=magenta,      
    urlcolor=cyan,
}

\usepackage{wrapfig}

\usepackage{xcolor}
\usepackage{subfig}
\usepackage{hyperref}
\usepackage{textcomp}
\usepackage{nicefrac}

\usepackage{algorithm}
\usepackage{algpseudocode}
 \usepackage{algorithmicx}
\algnewcommand{\LeftComment}[1]{\Statex \(\triangleright\) #1}

\usepackage{xcolor}
\usepackage{bbm}

\usepackage{graphicx}
%

\begin{document}
\title{Network utility maximization by updating individual transmission rates\thanks{The research is supported by the Ministry of Science and Higher Education of the Russian Federation (Goszadaniye) №075-00337-20-03, project No. 0714-2020-0005.}}
%

\author{Dmitry Pasechnyuk\orcidID{0000-0002-1208-1659}}
\authorrunning{D. Pasechnyuk}
%
\institute{Moscow Institute of Physics and Technology, Moscow, Russia}
\maketitle              
\begin{abstract}
This paper discusses the problem of maximizing the total data transmission utility of the computer network. The total utility is defined as the sum of the individual (corresponding to each node in the network) utilities that are concave functions of the data transmission rate. For the case of non-strongly concave utilities, we propose an approach based on the use of a fast gradient method to optimize a dually smoothed objective function. As an alternative approach, we introduce stochastic oracles for the problem under consideration and interpret them as the messages on the state of some individual node to use randomized switching mirror descent to solve the problem above. We propose interpretations of both described approaches allowing the effective implementation of the protocols of their operation in the real-life computer networks environment, taking into account the distributed information storage and the restricted communication capabilities. The numerical experiments were carried out to compare the proposed approaches on sythetic examples of network architectures.

\keywords{Resource allocation \and Computer networks \and Utility maximization \and Fast gradient method \and Primal-dual method \and Randomized mirror descent.}
\end{abstract}

\section{Introduction}

Management of the operation of computer communication networks and support of their efficiency, in view of their widespread and massive use, are the crucial tasks today. In particular, the proper management of data transmission in the network should ensure that there are no overloaded, and therefore slow, connections between computers. The network protocol in this setting is not a determining factor; you can fix it and imagine, for example, TCP Internet traffic. At the same time, the task of data flow control is then passed on to the individual computing agents, i.e. network participants. The natural way to control overloading of connections for them is the ability to change their own data transmission rates.

Of course, there are many algorithms and protocols for finding the optimal data transmission rates \cite{arrow1958decentralization,campbell1987resource,friedman1995complexity,kakhbod2013resource,rokhlin2021resource}. One of the currently developing approaches proposes to introduce a function of the total utility of the network and consider the optimization problem of maximizing utility in relation to variable data transmission rates \cite{kelly1998rate}. Thus, some intermediary or the network as a whole generate a sequence of values of transmission rates, tending to the minimum of the introduced potential. In such a setting, in order to provide the most resource-efficient procedure for finding the optimal rates, one can use the apparatus and methods developed by the modern theory of convex optimization.

This paper discusses some methods for efficiently optimizing data transmission rates. The first of them follows the idea of setting prices for data transmission over connections \cite{ivanova2020composite,nesterov2018dual}, so that transmission rates are chosen by each computer for reasons of maximum utility minus cost. The second uses a switching scheme to change the speeds alternately to more profitable and less loading ones.

However, for the practical application of the schemes proposed in various papers, it is important to represent their implementation in the real-life architecture of a computer network. For both approaches, we describe operating protocols that allow efficiently making updating and storing values distributed, partially parallelized and encapsulated, without losing efficiency. We analyze each method for the convergence rate and indicate, in addition, the dependence of the efficiency on the characteristics of the network and the problem.

This paper is organized as follows. Section \ref{section_problem} describes the task of utility maximization in computer networks and formulates the main optimization problem. Further, Section \ref{section_smoothing} using the dual smoothing technique introduce the dual optimization problem and analyses the properties of it. Section \ref{section_fgm} describes the primal-dual fast gradient method and analyses its convergence rate for the considered dual problem. We also describe the possibilities of the distributed implementation for this algorithm. Section \ref{section_mirror} consider a different type of algorithm, i.e. randomized version of switching mirror descent. We provide the corresponding convergence theorem for the considered problem, and give a description of the possible method operation protocol in a real-life computer network environment. Finally, Section \ref{section_num} describes experiments on the application of the considered optimization methods to some synthetic computer networks.

\section{Problem statement} \label{section_problem}

By a computer network we mean a structure consisting of a set of vertices of size $n \in \mathbb{N}$, and a set of connections of size $m \in \mathbb{N}$. Each connection in such a model, unlike the edges in a graph, can have a relationship with more than two vertices. The structure of the relationship between connections and vertices is expressed by the matrix $C \in \{0, 1\}^{m \times n}$ according to the following rule: $C_{j i} = 1 \Leftrightarrow j$-th connection is in relation to $i$-th vertex. It turns out to be natural to distinguish separately such statements in which the matrix $C$ is sparse, i.e. the number of its nonzero elements $nnz(C) \ll m \cdot n$.

Data transmission in the network will be characterized from the point of view of each vertex $i$ by the value $x_i \in \mathbb{R}_+$ of its data transmission rate. Quantity $x_i$ determines how much the vertex $i$ loads each of the connections $j$ that are in the relation with it. Below we use the notation $x = (x_1, ..., x_n)^\top \in \mathbb{R}_+^n$. In turn, each of the connections $j$ is characterized by some upper bound on the total load, equal to the throughput $b_j \in \mathbb{R}_+$. Together they form a vector $b = (b_1, ..., b_m)^\top \in \mathbb{R}^m$. One can see that the constraints on the total load of each of the connections is met if and only if the inequality $C x \leq b$ holds.

The utility achieved by the $i$-th vertex is characterized by the utility function $u_i(x_i): \mathbb{R}_+ \rightarrow \mathbb{R}$, that depends on the data transmission rate of one $i$-th vertex. In reasonable formulations of the problem, this function is chosen concave in $x_i$. By the total utility taken by the entire network, we mean the function $U(x) = \sum_{i=1}^n u_i(x_i)$.

The practical task is to find such values of the data transmission rates for each of the vertices, for which the greatest total utility is achieved. Taking into account the additional details specified above, we can now formulate the main optimization problem:
\begin{equation}\label{primal}
    \max_{x: C x \leq b} \left\{U(x) = \sum_{i=1}^n u_i(x_i)\right\},
\end{equation}
where $x \in \mathbb{R}_+^n$, $C \in \mathbb{R}^{m \times n}$, $b \in \mathbb{R}^m$, and $u_i$  is differentiable and concave for all $i$.

\section{Dual smoothing} \label{section_smoothing}

Let us move from the formulated main optimization problem with linear constraints to the corresponding dual problem:
\begin{equation}
    \min_{\lambda} \left\{ \varphi(\lambda) = \max_{x} \left\{U(x) + \langle \lambda, b - C x \rangle\right\} \right\},
\end{equation}
where $\lambda \in \mathbb{R}^m_+$. Note that the resulting dual factors can be easily interpreted in terms of our subject area. Indeed, by $\lambda = (\lambda_1, ..., \lambda_m)^\top$ we mean the vector of prices for using a unit of bandwidth for each of the connections. Then, rewriting the definition above in the following form:
\[
    \varphi(\lambda) = \langle \lambda, b \rangle + \sum_{i=1}^n \left[u_i(x_i(\lambda)) - \langle \lambda, C^\top_i x_i(\lambda) \rangle \right],
\]
using the notation
\begin{equation}\label{greedy}
    x_i(\lambda) = \arg \max_{x} \left\{ u_i(x_i) - \langle \lambda, C^\top_i x_i \rangle \right\},
\end{equation}
we will obtain a natural interpretation for the dual problem: it thus consists in finding such values of the prices of connections that, provided that each of the vertices is chosen the most profitable for it (in term of utility $u_i(x_i)$ and costs $\langle \lambda, C^\top_i x_i(\lambda) \rangle$) values of data transmission performance, the term associated with violation of the constraints will be the smallest.

We now note the following fact: in the general case, the function $U$ is not strongly concave. At the same time, it is easy to show that in this case the function $\varphi$ is not necessarily Lipschitz smooth, and this would deprive us of the opportunity to use many of the methods for its optimization, including the fast gradient method. To deal with it, one can use the dual smoothing technique, i.e. instead of the problem \eqref{primal} consider the optimization problem for the regularized function $U_\mu$ with the regularization coefficient $\mu \sim \varepsilon / \| x_* - x_0 \|_p^2$, where $\varepsilon$ is the required accuracy of solving the primal problem. With this choice of $\mu$, it is known that if
\[
    \max_x U_\mu(x) - U_\mu(x_*) \leq \frac{\varepsilon}{2},
\]
is satisfied for some $x_*$, then $x_*$ is also a $\varepsilon$-solution of the problem \eqref{primal}.

Let us now move on to a more formal level, namely, we equip the space $\mathbb{R}^n$ with the norm $\|\cdot\|_p$, and the space $\mathbb{R}^m$ with the norm $\|\cdot\|_q$ for some $p, q \in [1, 2]$. We denote the regularized function as $U_\mu$ and its dual as follows:

\begin{equation}
    U_\mu(x) = U(x) - \frac{\mu}{2} \|x - x_0\|^2_p,\quad \varphi_\mu(\lambda) = \max_{x} \left\{U(x) + \langle \lambda, b - C x \rangle - \frac{\mu}{2} \|x - x_0\|^2_p\right\}
\end{equation}

The following lemma describes the properties of the resulting function $\varphi_\mu$:

\begin{lemma}\label{lipschitz}
The function $\varphi_\mu $ has Lipschitz continuous gradient, i.e. $\forall \lambda_1, \lambda_2 \in \mathbb{R}^m_+:$
\[
    \|\nabla \varphi_\mu(\lambda_1) - \nabla \varphi_\mu(\lambda_2)\|_q \leq L \|\lambda_1 - \lambda_2\|_q,
\]
for $L = \|C\|^2_{p,q} / \mu$, where
\[
    \|C\|_{p,q} = \max_{\|x\|_p=1, \|\lambda\|_q=1} \langle \lambda, C x \rangle
\]
\end{lemma}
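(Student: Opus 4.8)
The plan is to follow the classical Nesterov smoothing argument: show first that the smoothed objective is differentiable with $\nabla \varphi_\mu(\lambda) = b - C x(\lambda)$, and then bound the variation of the maximizer $x(\lambda)$ through the strong concavity introduced by the regularizer. Write $f_\lambda(x) = U(x) + \langle \lambda, b - C x\rangle - \frac{\mu}{2}\|x - x_0\|_p^2$ for the inner objective, so that $\varphi_\mu(\lambda) = \max_x f_\lambda(x)$. The term $-\frac{\mu}{2}\|x - x_0\|_p^2$ makes $f_\lambda$ strongly concave (with parameter $\mu$) with respect to $\|\cdot\|_p$ for every fixed $\lambda$; hence the maximizer $x(\lambda)$ is unique and well defined, matching the greedy rule \eqref{greedy}. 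By Danskin's theorem, since $f_\lambda$ is concave in $x$ and affine in $\lambda$ with a unique maximizer, $\varphi_\mu$ is differentiable with $\nabla \varphi_\mu(\lambda) = \nabla_\lambda f_\lambda(x(\lambda)) = b - C x(\lambda)$.

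First I would establish Lipschitz continuity of the map $\lambda \mapsto x(\lambda)$. Fix $\lambda_1, \lambda_2$ and set $x_1 = x(\lambda_1)$, $x_2 = x(\lambda_2)$. Strong concavity of $f_{\lambda_1}$ at its maximizer $x_1$ gives $f_{\lambda_1}(x_1) - f_{\lambda_1}(x_2) \geq \frac{\mu}{2}\|x_1 - x_2\|_p^2$, and symmetrically $f_{\lambda_2}(x_2) - f_{\lambda_2}(x_1) \geq \frac{\mu}{2}\|x_1 - x_2\|_p^2$. Adding these two inequalities, the terms involving $U(x_i)$ and $\frac{\mu}{2}\|x_i - x_0\|_p^2$ cancel, leaving only the bilinear part, so that
\[
    \mu \|x_1 - x_2\|_p^2 \leq \langle \lambda_2 - \lambda_1, C(x_1 - x_2)\rangle.
\]

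Next I would close the estimate using the operator norm. By the definition of $\|C\|_{p,q}$ and homogeneity, $\langle \lambda_2 - \lambda_1, C(x_1 - x_2)\rangle \leq \|C\|_{p,q}\,\|\lambda_1 - \lambda_2\|_q\,\|x_1 - x_2\|_p$, so dividing by $\|x_1 - x_2\|_p$ yields $\|x_1 - x_2\|_p \leq (\|C\|_{p,q}/\mu)\,\|\lambda_1 - \lambda_2\|_q$. Finally, since $\nabla \varphi_\mu(\lambda_1) - \nabla \varphi_\mu(\lambda_2) = C(x_2 - x_1)$, applying the operator norm once more gives $\|\nabla \varphi_\mu(\lambda_1) - \nabla \varphi_\mu(\lambda_2)\|_q \leq \|C\|_{p,q}\,\|x_1 - x_2\|_p \leq (\|C\|_{p,q}^2/\mu)\,\|\lambda_1 - \lambda_2\|_q$, which is exactly the claim with $L = \|C\|_{p,q}^2/\mu$.

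I expect the main obstacle to be the two non-elementary points underlying this chain: justifying the uniqueness and differentiability step rigorously (Danskin's theorem together with the strong concavity of $\frac{1}{2}\|\cdot\|_p^2$ with respect to $\|\cdot\|_p$, which for $p \in [1,2]$ is a genuine fact rather than a triviality and governs the effective strong-concavity constant), and the careful bookkeeping of primal versus dual norms when passing through $\|C\|_{p,q}$ in the last display, so that the gradient is measured in precisely the norm appearing in the statement.
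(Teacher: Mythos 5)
Your argument is the classical Nesterov smoothing proof, and in that sense it takes the same route as the paper: the paper's entire proof is the sentence ``literally coincides with the proof of Theorem~1 from \cite{nesterov2005smooth}'', and what you have written is precisely that proof spelled out (maximizer stability via strong concavity of the regularized inner problem, then two applications of the operator norm). For the case $p=q=2$ --- which is the only case the paper actually uses downstream --- your chain of inequalities is correct and complete.

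However, the two ``obstacles'' you name at the end are not mere bookkeeping; as the proof is written, they are genuine gaps for the full range $p,q\in[1,2]$ claimed in the lemma. First, you assert that $-\frac{\mu}{2}\|x-x_0\|_p^2$ is $\mu$-strongly concave with respect to $\|\cdot\|_p$. The correct modulus is $\mu(p-1)$: for $p\in(1,2)$ your argument therefore yields $L=\|C\|_{p,q}^2/(\mu(p-1))$, not $\|C\|_{p,q}^2/\mu$, and at $p=1$ there is no strong concavity at all --- the inner maximizer need not be unique and $\varphi_\mu$ need not be differentiable (for instance, with linear utilities $U(x)=\langle a,x\rangle$ one gets $\varphi_\mu(\lambda)=\langle\lambda,b\rangle+\frac{1}{2\mu}\|(a-C^\top\lambda)_+\|_\infty^2$, which is non-smooth wherever the maximal coordinate ties). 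Second, the definition $\|C\|_{p,q}=\max_{\|x\|_p=1,\|\lambda\|_q=1}\langle\lambda,Cx\rangle$ gives $\max_{\|\lambda\|_q=1}\langle\lambda,Cv\rangle=\|Cv\|_{q^*}$, so your final display actually bounds $\|C(x_1-x_2)\|_{q^*}$ in the \emph{dual} norm; since $q^*\geq q$ for $q\in[1,2]$, one has $\|\cdot\|_{q^*}\leq\|\cdot\|_q$, and the $\|\cdot\|_q$ bound you state is strictly stronger and does not follow (it can fail by dimension-dependent factors, e.g.\ for $q=1$). Both defects vanish exactly when $p=q=2$. In fairness, these imprecisions originate in the lemma statement itself --- Nesterov's theorem carries the prox-function's strong-convexity parameter in the denominator and measures gradients in the dual norm --- so your proof is exactly as correct as the statement it proves; a clean write-up should either restrict to $p=q=2$ or insert the factor $(p-1)$ and replace $\|\cdot\|_q$ by $\|\cdot\|_{q^*}$ on the gradient difference.
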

\begin{proof}
Literally coincides with the proof of Theorem 1 from \cite{nesterov2005smooth}.
\end{proof}

Assuming at the same time that $\|x_* - x_0\|^2_p \leq R_p^2$, for an important special case of $p = q = 2$ choosing $\mu = \varepsilon / R_p^2$ we have
\[
    L = \frac{R_p^2 \lambda_{max}(C^\top C)}{\varepsilon},
\]
where $\lambda_{max}(A)$ is the largest eigenvalue of the matrix $A$. The next lemma provides another, somewhat more intuitive in the framework of the considered subject area, bound for the Lipschitz constant of the $\varphi_\mu$ gradient.

\begin{lemma}
When $p = q = 2$, function $\varphi_\mu$ has $L$-Lipschitz continuous gradient with $L = nnz(C) / \mu$
\end{lemma}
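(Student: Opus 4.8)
The plan is to reduce this lemma to the previous one and then exploit the special structure of the matrix $C$. By the preceding lemma, taking $p=q=2$, the function $\varphi_\mu$ already has an $L$-Lipschitz gradient with $L = \|C\|^2_{2,2}/\mu$, where $\|C\|_{2,2} = \max_{\|x\|_2=1,\|\lambda\|_2=1}\langle \lambda, Cx\rangle$ is exactly the largest singular value of $C$. Since the largest singular value squared equals the largest eigenvalue of $C^\top C$, I would first record the identity
\[
    \|C\|^2_{2,2} = \lambda_{max}(C^\top C),
\]
so that it suffices to establish the bound $\lambda_{max}(C^\top C) \leq nnz(C)$.

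The key estimate is an eigenvalue-versus-trace comparison. The matrix $C^\top C$ is symmetric positive semidefinite, hence all of its eigenvalues are nonnegative, and therefore the largest single eigenvalue is dominated by the sum of all of them:
\[
    \lambda_{max}(C^\top C) \leq \sum_{k} \lambda_k(C^\top C) = \mathrm{tr}(C^\top C).
\]
I would then compute the trace directly as the squared Frobenius norm, $\mathrm{tr}(C^\top C) = \|C\|_F^2 = \sum_{j=1}^m \sum_{i=1}^n C_{ji}^2$.

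Finally I would invoke the defining assumption that $C \in \{0,1\}^{m\times n}$ is a binary matrix, so that $C_{ji}^2 = C_{ji}$ for every entry. Consequently
\[
    \|C\|_F^2 = \sum_{j=1}^m \sum_{i=1}^n C_{ji} = nnz(C),
\]
which is precisely the number of nonzero elements of $C$. Chaining the three displayed relations gives $\lambda_{max}(C^\top C) \leq nnz(C)$, and hence $L = \|C\|^2_{2,2}/\mu \leq nnz(C)/\mu$, completing the argument.

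There is no genuinely hard step here; the only point requiring a little care is the justification that the spectral radius is bounded by the trace, which rests on the positive semidefiniteness of $C^\top C$ (all eigenvalues nonnegative). The rest is the observation that for a $0/1$-matrix the Frobenius norm squared collapses to the count of nonzero entries. It is worth remarking that this bound is typically far looser than $\lambda_{max}(C^\top C)$ itself, but it is attractive because $nnz(C)$ is a transparent, easily computed structural quantity of the network, matching the sparsity discussion in Section \ref{section_problem}.
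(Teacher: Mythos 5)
Your proof is correct, but it takes a genuinely different route from the paper's. The paper does not invoke Lemma~\ref{lipschitz} at all: it re-derives smoothness from scratch by writing the first-order optimality conditions for the inner problem \eqref{greedy} at $\lambda_1$ and $\lambda_2$, summing them to exploit the $\mu$-strong concavity of the regularized objective, which shows that each map $\lambda \mapsto x_i(\lambda)$ is Lipschitz with constant $\|C_i^\top\|_2/\mu$; since $\nabla\varphi_\mu(\lambda) = b - Cx(\lambda) = b - \sum_{i=1}^n C_i x_i(\lambda)$, a triangle-inequality summation over columns then yields the constant $\sum_{i=1}^n \|C_i^\top\|_2^2/\mu = nnz(C)/\mu$, the last equality using $C_{ji}\in\{0,1\}$. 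You instead reduce the claim to Lemma~\ref{lipschitz} (giving $L = \|C\|_{2,2}^2/\mu = \lambda_{max}(C^\top C)/\mu$) and finish with the elementary comparison $\lambda_{max}(C^\top C) \leq \mathrm{tr}(C^\top C) = \|C\|_F^2 = nnz(C)$ valid for a $0/1$ matrix; this is sound, because Lipschitz continuity of the gradient with a smaller constant implies it with a larger one. Your route is shorter and makes explicit what the paper leaves implicit: the $nnz(C)$ bound is always a relaxation of the spectral bound, so the two lemmas are ordered (both ultimately weaken the spectral norm to the Frobenius norm). What the paper's argument buys is self-containedness --- it does not lean on Lemma~\ref{lipschitz}, whose proof is given only by citation to Nesterov's smoothing theorem --- and its per-vertex (per-column) decomposition matches the distributed, per-node structure that the paper emphasizes in the interpretation sections.
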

\begin{proof}
From the first order optimality conditions for \eqref{greedy}:
\[
    \langle \nabla u_i(x_i(\lambda_1)) - \langle \lambda_1, C_i^\top \rangle - \mu (x_i(\lambda_1) - [x_0]_i), x_i(\lambda_1) - x_i(\lambda_2) \rangle \rangle \geq 0,
\]
\[
    \langle \nabla u_i(x_i(\lambda_2)) - \langle \lambda_2, C_i^\top \rangle - \mu (x_i(\lambda_2) - [x_0]_i), x_i(\lambda_2) - x_i(\lambda_1) \rangle \rangle \geq 0.
\]
Summing up, we have:
\begin{align*}
    &\mu \|x_i(\lambda_1) - x_i(\lambda_2)\|_2^2 \\
    &\leq \langle \nabla u_i(x_i(\lambda_2)) - \nabla u_i(x_i(\lambda_1)) - \mu (x_i(\lambda_1) - x_i(\lambda_2)), x_i(\lambda_1) - x_i(\lambda_2) \rangle \\
    &\leq \langle \langle \lambda_1, C_i^\top \rangle - \langle \lambda_2, C_i^\top \rangle, x_i(\lambda_1) - x_i(\lambda_2) \rangle,
\end{align*}
whence
\begin{align*}
     \|\nabla \varphi_i(\lambda_1) - \nabla \varphi_i(\lambda_2)\|_2 \leq \|C_i^\top\|_2 \cdot \|x_i(\lambda_1) - x_i(\lambda_2)\|_2 \leq \frac{\|C_i^\top\|^2_2}{\mu} \|\lambda_1 - \lambda_2\|_2.
\end{align*}
Summing over components and taking into account $C_{j i} \in \{0, 1\}$, we obtain to the expression:
\[
    \|\nabla \varphi(\lambda_1) - \nabla \varphi(\lambda_2)\|_2 \leq \frac{\sum_{i=1}^n \|C_i^\top\|^2_2}{\mu} \|\lambda_1 - \lambda_2\|_2 = \frac{nnz(C)}{\mu} \|\lambda_1 - \lambda_2\|_2.
\]
\end{proof}

Thus, the properties of the optimized function directly depend on the sparsity of the matrix $C$, in other words, in terms of our domain, the fewer vertices on average are in relation to one connection, the less time-consuming the process of finding a solution to the problem is. Further, the nature of this dependence will be refined in the convergence rate bounds for the methods.

\section{Fast gradient method} \label{section_fgm}
\subsection{Theoretical guarantees}

	\begin{algorithm}[H]
		\caption{Primal-dual Fast Gradient Method}\label{alg_fgm}
		\begin{algorithmic}[1]
			
			\Require $\lambda_0$.

            \State $\alpha_{t} = \frac{t+1}{2}$
			
			\State $A_{-1} = 0$, $A_t = A_{t-1} + \alpha_{t} = \frac{(t+1)(t+2)}{4}$
			
			\State $\tau_{t} = \frac{\alpha_{t+1}}{A_{t+1}}=\frac{2}{t + 3}$

			\For{$t=0,\, 1, \, \ldots, \, N-1$}
    			\State Evaluate $\varphi_\mu(\lambda_t)$, $\nabla \varphi_\mu(\lambda_t)$ 
    			\State $y_t = \left[ 
    			    \lambda_t - \frac{1}{L} \left( b - C x(\lambda_t) \right)
                \right]_+$
                
                \State $z_t = \left[
    				\lambda_0 - \frac{1}{L} \sum_{k=0}^t \alpha_k \left(b - C x(\lambda_k) \right)
    		    \right]_+$
    		    
    			\State $\lambda_{t+1} = \tau_t z_t + (1 - \tau_t) y_t$
            \EndFor
			\State\Return $\lambda_N$, $\hat{x}_N = \frac{1}{A_{N}} \sum_{t=0}^{N} \alpha_t x(\lambda_t)$
		\end{algorithmic}
	\end{algorithm}

To analyze the method, we introduce the notation:
\[
\psi_t(\lambda) = \sum_{k = 0}^t \alpha_k \left [ 
\varphi(\lambda_k) + \langle
			\nabla \varphi (\lambda_k), \lambda - \lambda_k\rangle 
			\right ] + \frac{L}{2} \|\lambda - \lambda_0 \|_q^2.
\]

\begin{lemma}\label{lemma_ineq}
\begin{equation}\label{ineq}
A_N \varphi(y_N) \leq \min_{\lambda}
\psi_N (\lambda) = \psi_N(z_N).
\end{equation}
\end{lemma}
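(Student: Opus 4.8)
The plan is to establish \eqref{ineq} by proving, by induction on the counter $t = 0, 1, \ldots, N$, the stronger assertion that $A_t \varphi(y_t) \leq \psi_t(z_t) = \min_\lambda \psi_t(\lambda)$, where the minimum is over the feasible cone $\mathbb{R}^m_+$. First I would record two facts used throughout: by Danskin's theorem the gradient appearing in the algorithm is exactly $\nabla \varphi(\lambda_k) = b - C x(\lambda_k)$, and, since $\psi_t$ consists of affine terms plus the strongly convex prox term $\tfrac{L}{2}\|\lambda - \lambda_0\|_q^2$, it has a unique constrained minimizer. For $q = 2$ this minimizer is precisely the Euclidean projection $z_t = [\lambda_0 - \tfrac{1}{L}\sum_{k=0}^t \alpha_k \nabla\varphi(\lambda_k)]_+$ defined in Algorithm~\ref{alg_fgm}, which already settles the equality $\min_\lambda \psi_N(\lambda) = \psi_N(z_N)$; for general $q \in [1,2]$ one replaces the Euclidean prox by the corresponding strongly convex prox-function and its prox-mapping, the argument being identical. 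The base case $t=0$ reduces to the descent inequality for the step $y_0 = [\lambda_0 - \tfrac{1}{L}\nabla\varphi(\lambda_0)]_+$ together with the numerical relation $\alpha_0^2 \leq A_0$, which holds since $\alpha_0^2 = \tfrac14 \leq \tfrac12 = A_0$.

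For the inductive step I would write $\psi_{t+1}(\lambda) = \psi_t(\lambda) + \alpha_{t+1}[\varphi(\lambda_{t+1}) + \langle \nabla\varphi(\lambda_{t+1}), \lambda - \lambda_{t+1}\rangle]$ and lower-bound the first summand using the $L$-strong convexity of $\psi_t$ around its minimizer, $\psi_t(\lambda) \geq \psi_t(z_t) + \tfrac{L}{2}\|\lambda - z_t\|_q^2$ (this is where the first-order optimality of $z_t$ over $\mathbb{R}^m_+$ enters). Applying the induction hypothesis $\psi_t(z_t) \geq A_t \varphi(y_t)$ and then the convexity bound $\varphi(y_t) \geq \varphi(\lambda_{t+1}) + \langle \nabla\varphi(\lambda_{t+1}), y_t - \lambda_{t+1}\rangle$ produces a quadratic-in-$\lambda$ minorant of $\psi_{t+1}$. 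Since minimizing a minorant over all of $\mathbb{R}^m$ can only decrease the value, I obtain $\min_{\lambda \in \mathbb{R}^m_+}\psi_{t+1}(\lambda)$ bounded below by the unconstrained minimum of that minorant, whose minimizer is $z_t - \tfrac{\alpha_{t+1}}{L}\nabla\varphi(\lambda_{t+1})$.

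The key cancellation comes from the momentum coupling. The update $\lambda_{t+1} = \tau_t z_t + (1-\tau_t) y_t$ with $\tau_t = \alpha_{t+1}/A_{t+1}$ and $A_{t+1} - \alpha_{t+1} = A_t$ is equivalent to $A_{t+1}\lambda_{t+1} = \alpha_{t+1} z_t + A_t y_t$, which makes the cross term $\langle \nabla\varphi(\lambda_{t+1}),\, A_t y_t + \alpha_{t+1} z_t - A_{t+1}\lambda_{t+1}\rangle$ vanish identically. What survives is $A_{t+1}\varphi(\lambda_{t+1}) - \tfrac{\alpha_{t+1}^2}{2L}\|\nabla\varphi(\lambda_{t+1})\|_q^2$. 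Invoking the descent inequality $\varphi(y_{t+1}) \leq \varphi(\lambda_{t+1}) - \tfrac{1}{2L}\|\nabla\varphi(\lambda_{t+1})\|_q^2$ for the step defining $y_{t+1}$, together with the growth condition $\alpha_{t+1}^2 \leq A_{t+1}$, closes the induction: indeed $A_{t+1} = \tfrac{(t+2)(t+3)}{4} \geq \tfrac{(t+2)^2}{4} = \alpha_{t+1}^2$, so the retained quadratic penalty dominates $A_{t+1}$ times the descent gap, giving $A_{t+1}\varphi(y_{t+1}) \leq \min_\lambda \psi_{t+1}(\lambda)$.

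I expect the main obstacle to lie entirely in the careful treatment of the projection $[\cdot]_+$ onto $\mathbb{R}^m_+$. Two places need the constrained rather than textbook versions: the strong-convexity minorant for $\psi_t$, which requires the variational inequality $\langle \nabla\psi_t(z_t), \lambda - z_t\rangle \geq 0$ valid only because $z_t$ is a constrained minimizer, and the descent step for $y_{t+1}$, where the plain bound $\varphi(\lambda_{t+1}) - \tfrac{1}{2L}\|\nabla\varphi\|^2$ must in general be replaced by its gradient-mapping surrogate. Verifying that minimizing the minorant over $\mathbb{R}^m$ (rather than $\mathbb{R}^m_+$) still yields a legitimate lower bound is the other point demanding a sentence of justification. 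Everything else is the routine estimate-sequence bookkeeping, and the two numerical ingredients $A_{t+1}-\alpha_{t+1}=A_t$ and $\alpha_{t+1}^2 \leq A_{t+1}$ are exactly what the prescribed sequences $\alpha_t = \tfrac{t+1}{2}$, $A_t = \tfrac{(t+1)(t+2)}{4}$ are engineered to satisfy.
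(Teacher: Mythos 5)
Your proposal follows essentially the same route as the paper's proof: the identical estimate-sequence induction, with the strong-convexity minorant $\psi_t(\lambda) \geq \psi_t(z_t) + \tfrac{L}{2}\|\lambda - z_t\|_q^2$, the induction hypothesis plus convexity of $\varphi$ at $y_t$, the momentum identity $A_{t+1}\lambda_{t+1} = \alpha_{t+1} z_t + A_t y_t$ to cancel the cross terms, the Fenchel-type unconstrained minimization yielding $A_{t+1}\varphi(\lambda_{t+1}) - \tfrac{\alpha_{t+1}^2}{2L}\|\nabla\varphi(\lambda_{t+1})\|^2$, the descent inequality for $y_{t+1}$, and the numerical facts $A_{t+1} - \alpha_{t+1} = A_t$ and $\alpha_{t+1}^2 \leq A_{t+1}$. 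The only difference is that you explicitly flag the projection-onto-$\mathbb{R}^m_+$ subtleties (the variational inequality at the constrained minimizer $z_t$, and the gradient-mapping surrogate needed for the projected step $y_{t+1}$), which the paper's proof silently glosses over by treating those minimizations as unconstrained.
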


\begin{proof}
Let us prove by induction that \eqref{ineq} holds.
When $t = 0$, the following holds:
\[
\psi_0(z_0) = \min_{\lambda}
\left \{
\alpha_0 \left[ 
\varphi_\mu(\lambda_0) + \langle
			\nabla \varphi_\mu(\lambda_0), \lambda - \lambda_0\rangle 
			\right] +
			\frac{L}{2} \|\lambda - \lambda_0\|_q^2
\right \} \geq \alpha_0 \varphi_\mu(y_0).
\]
Let \eqref{ineq} to be true for $t$: $A_t \varphi(y_t) \leq \psi_t(z_t)$. Let us prove that \eqref{ineq} is true for $t+1$.
Indeed, we have
\begin{align}
\nonumber &\psi_{t+1} (z_{t+1}) = \min_{\lambda}
\left\{
\psi_t(\lambda) + \alpha_{t+1}
\left[ 
\varphi(\lambda_{t+1}) + 
			\langle	\nabla \varphi (\lambda_{t+1}), \lambda - \lambda_{t+1}\rangle 
			\right] 
\right\} \\
\nonumber &\geq 
\min_{\lambda}
\left \{
\psi_t(z_t) + \frac{L}{2} \|\lambda - z_t\|_q^2 +  \alpha_{t+1}
\left [ 
\varphi(\lambda_{t+1}) + 
		\langle	\nabla \varphi_\mu(\lambda_{t+1}), \lambda - \lambda_{t+1}\rangle 
			\right ] 
\right \}  \\
\nonumber & \geq
\min_{\lambda}
\left \{
A_t \varphi(y_t) + \frac{L}{2} \| \lambda - z_t \|_q^2 +
\alpha_{t+1}
\left [ 
\varphi_\mu(\lambda_{t+1}) + 
		\langle	\nabla \varphi_\mu(\lambda_{t+1}), \lambda - \lambda_{t+1}\rangle 
			\right ] 
\right \} 
\\
\label{psi_ineq} 
& \geq
\min_{\lambda}
\left \{
A_t \left ( \varphi_\mu(\lambda_{t+1}) + \langle \nabla \varphi_\mu(\lambda_{t+1}), y_t - \lambda_{t+1}\rangle  \right ) +
\frac{L}{2} \|\lambda - z_t\|_q^2 +
\alpha_{t+1}
\left [ \dotsi\right ] 
\right \}.
\end{align}
Step $\lambda_{t + 1} = \tau_t z_t + (1 - \tau_t) y_t$ one can rewrite as the relation $A_{t+1} \lambda_{t+1} = \alpha_{t+1} z_t + A_t y_t$. Using it, we transform:
\begin{align*}
A_t  &\langle  \nabla \varphi_\mu(\lambda_{t+1}), y_t - \lambda_{t+1} \rangle  + \alpha_{t+1} \langle \nabla \varphi_\mu(\lambda_{t+1}), \lambda - \lambda_{t+1} \rangle   =\\ 
& = - A_{t+1} \langle \nabla \varphi_\mu(\lambda_{t+1}), \lambda_{t+1} \rangle  + \alpha_{t+1} \langle \nabla \varphi_\mu(\lambda_{t+1}),
\lambda \rangle + A_t \langle \nabla \varphi_\mu(\lambda_{t+1}), y_t \rangle \\
&= \alpha_{t+1} 
\langle
\nabla \varphi_\mu(\lambda_{t+1}), \lambda - z_t
\rangle.
\end{align*}
Hence we have
\begin{align}
\nonumber A_t &\left ( \varphi_\mu(\lambda_{t+1}) + \langle \nabla \varphi_\mu(\lambda_{t+1}),
y_t - \lambda_{t+1} \rangle \right ) + \frac{L}{2} \|\lambda - z_t\|_q^2 \\
&+ \alpha_{t+1}
\left [ 
\varphi_\mu(\lambda_{t+1}) + 
		\langle	\nabla \varphi_\mu(\lambda_{t+1}), \, \lambda - \lambda_{t+1}
		\rangle
			\right ] =
\\
\label{re_equation}
&= A_{t+1} \varphi_\mu(\lambda_{t+1}) + 
\frac{L}{2} \|\lambda - z_t\|_q^2 +
\alpha_{t+1} 
\langle
\nabla \varphi_\mu(\lambda_{t+1}), \,
\lambda - z_t
\rangle.
\end{align}
After substituting the \eqref{re_equation} in the last expression of~\eqref{psi_ineq} one can use an extended version of Fenchel's inequality for conjugate functions~\cite{nesterov2013disser}:
\[
\langle g, s \rangle + \frac{\xi}{2} \|s\|_q^2 \geq - \frac{1}{2 \xi} \|g\|_{q*}^2.
\]
In our case $g = \nabla \varphi (\lambda_{t+1})$,
$s = \lambda - z_t$, $\xi = \frac{L}{\alpha_{t+1}}$.
Hence,
\begin{equation}
\label{eq_lem1_after_fen}
\psi_{t+1}(z_{t+1}) \geq 
A_{t+1} \varphi_\mu(\lambda_{t+1}) - \frac{\alpha_{t+1}^2}{2L}
\|\nabla \varphi_\mu(\lambda_{t+1})\|^2_q.
\end{equation}
Further, by Lipschitz smoothness of $\varphi_\mu$:
\begin{align*}
    \varphi(y_{t+1}) & \leq
\varphi_\mu(\lambda_{t+1}) + \langle \nabla \varphi_\mu(\lambda_{t+1}), y_{t+1} - \lambda_{t+1}\rangle + \frac{L}{2}
\|y_{t+1} - \lambda_{t+1} \|_q^2 = \\ &
=\min_{\lambda}
\left \{ \varphi_\mu(\lambda_{t+1}) + \langle \nabla \varphi_\mu(\lambda_{t+1}), \lambda - \lambda_{t+1}\rangle  + \frac{L}{2} \|\lambda - \lambda_{t+1}\|_q^2 
			\right
			\} \\ 
		&= \varphi_\mu(\lambda_{t+1}) - \frac{1}{2 L} \| \nabla \varphi_\mu(\lambda_{t+1}) \|_q^2.		
\end{align*}
After multiplying both sides of the resulting inequality by $A_{t+1}$:
\begin{align}
A_{t+1} \varphi_\mu(y_{t+1}) &\leq A_{t+1} \varphi_\mu(\lambda_{t+1}) - \frac{A_{t+1}}{2 L} \| \nabla \varphi_\mu(\lambda_{t+1})\|_q^2 \\
\label{eq_aphi_alam}
&\leq
A_{t+1} \varphi_\mu(\lambda_{t+1}) - \frac{\alpha_{t+1}^2}{2L}
\|\nabla \varphi_\mu(\lambda_{t+1})\|^2_q.
\end{align}
Therefore, due to~\eqref{eq_lem1_after_fen} and \eqref{eq_aphi_alam} we have
$A_{t+1} \varphi_\mu(y_{t+1}) \leq \psi_{t+1}(z_{t+1})$.
\end{proof}
	
\begin{lemma}\label{fgm_lemma}
It can be assumed that $\|\lambda_*\|_q, \|\lambda_0\|_q \leq R_q$. Then the point $\hat{x}_N$ obtained after $N$ iterations of the Algorithm~\ref{alg_fgm} satisfies the condition
\begin{equation}
\varphi_\mu(y_N) - U_\mu(\hat{x}_N)
+ 5 R_q \|(C \hat{x}_N - b)_+\|_q \leq \frac{26 L R_q^2}{A_N}.
\end{equation}
\end{lemma}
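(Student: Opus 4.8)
The plan is to convert the estimating-sequence bound of Lemma~\ref{lemma_ineq} into a primal--dual guarantee by linearizing the dual objective through the envelope theorem and then testing the resulting minorant against a vector aligned with the constraint violation. First I would record the two consequences of the definition of the smoothed maximizer $x(\lambda)$ (the regularized analog of \eqref{greedy}): by Danskin's theorem $\nabla\varphi_\mu(\lambda_k) = b - Cx(\lambda_k)$, and since $\varphi_\mu(\lambda_k) = U_\mu(x(\lambda_k)) + \langle\lambda_k, b - Cx(\lambda_k)\rangle$, the linear model collapses, after the $\lambda_k$-dependent terms cancel, to
\[
\varphi_\mu(\lambda_k) + \langle\nabla\varphi_\mu(\lambda_k),\lambda - \lambda_k\rangle = U_\mu(x(\lambda_k)) + \langle\lambda, b - Cx(\lambda_k)\rangle .
\]
Substituting this into the definition of $\psi_N$ and using $\sum_{k=0}^N\alpha_k = A_N$ turns the accumulated linear part into $A_N\langle\lambda, b - C\hat x_N\rangle$, with $\hat x_N$ the point returned by the algorithm.

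The second step is a single use of Jensen's inequality: since $U_\mu$ is concave and $\hat x_N = \frac1{A_N}\sum_k\alpha_k x(\lambda_k)$ is a convex combination, $\sum_{k=0}^N\alpha_k U_\mu(x(\lambda_k)) \leq A_N U_\mu(\hat x_N)$. Combining this with Lemma~\ref{lemma_ineq} (whose minimum is taken over the feasible set $\lambda\geq 0$, owing to the projections $[\cdot]_+$ in the algorithm) and dividing by $A_N$, I obtain the master inequality valid for every $\lambda\geq 0$:
\[
\varphi_\mu(y_N) - U_\mu(\hat x_N) + \langle C\hat x_N - b,\, \lambda\rangle \leq \frac{L}{2A_N}\|\lambda - \lambda_0\|_q^2 .
\]

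The final step is the choice of the test vector. Using the dual-norm representation of the violation, $\|(C\hat x_N - b)_+\|_q = \max\{\langle\zeta, C\hat x_N - b\rangle : \zeta\geq 0,\ \|\zeta\|_{q*}\leq 1\}$, I take the maximizing $\zeta$ and set $\lambda = 5R_q\,\zeta\geq 0$. Then $\langle C\hat x_N - b,\lambda\rangle = 5R_q\|(C\hat x_N - b)_+\|_q$ reproduces exactly the penalty term in the statement, while $\|5R_q\zeta - \lambda_0\|_q^2 \leq 2\cdot 25 R_q^2\|\zeta\|_q^2 + 2\|\lambda_0\|_q^2 \leq 52 R_q^2$, using $(a+b)^2\leq 2a^2+2b^2$, the bound $\|\zeta\|_q\leq 1$ in the self-dual case $p=q=2$, and $\|\lambda_0\|_q\leq R_q$. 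This gives the asserted $\frac{L}{2A_N}\cdot 52 R_q^2 = \frac{26 L R_q^2}{A_N}$.

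I expect the main obstacle to be conceptual rather than computational: one must notice that $\hat x_N$ need not be feasible, so the gap $\varphi_\mu(y_N)-U_\mu(\hat x_N)$ may itself be negative, and that the role of the test vector is precisely to convert the infeasibility into the nonnegative penalty $5R_q\|(C\hat x_N-b)_+\|_q$. Getting the \emph{direction} of $\lambda$ to align with $(C\hat x_N-b)_+$ and the \emph{magnitude} $5R_q$ correct (rather than naively taking $\lambda=\lambda_0$ or $\lambda=\lambda_*$) is what lets one control the duality gap and the violation simultaneously. A minor technical caveat is the appearance of the dual norm $\|\cdot\|_{q*}$ on $\zeta$ against $\|\cdot\|_q$ on $\lambda_0$; these coincide in the principal case $p=q=2$, which is exactly where the clean constant $26$ emerges.
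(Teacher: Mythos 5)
Your proof is correct and follows essentially the same route as the paper: Lemma~\ref{lemma_ineq}, the collapse of the linearization to $U_\mu(x(\lambda_k)) + \langle\lambda, b - Cx(\lambda_k)\rangle$ via the dual structure, Jensen's inequality for the concave $U_\mu$, and the constant $26 = \tfrac{1}{2}(2\cdot 25 + 2)$ arising from testing at distance $5R_q$ from the origin. The only cosmetic difference is that the paper takes the minimum of the linear term over the ball $\{\lambda \geq 0,\ \|\lambda\|_q \leq 5R_q\}$ while you plug in the extremal point $5R_q\zeta$ of that same ball directly (these coincide), and the norm--dual-norm caveat you flag for $q \neq 2$ is present, unflagged, in the paper's own final step as well.
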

\begin{proof}
The vector of dual factors can be localized using the Slater condition. Considering $\|\lambda_t-\lambda_*\|_q \leq \|\lambda_* - \lambda_0\|_q$ (it proved in \cite{ivanova2021numerical}) we have $\|\lambda_t\|_q \leq \|\lambda_t-\lambda_*\|_q + \|\lambda_* - \lambda_0\|_q + \|\lambda_0\|_q \leq 5 R_q$.

From Lemma~\ref{lemma_ineq} and considering also $\|\lambda - \lambda_0\|_q^2 \leq
2 \|\lambda\|_q^2 + 2 \|\lambda_0\|_q^2 \leq
2R_q^2 + 50R_q^2$ we have:
\begin{align*}
A_N \varphi(y_N) & \leq \min_{\lambda} 
\left\{
\frac{L}{2} \|\lambda - \lambda_0\|_q^2 +
\sum_{t = 0}^N \alpha_t \left [ 
\varphi_\mu(\lambda_t) + 
			\langle 
			\nabla \varphi_\mu (\lambda_t), \lambda - \lambda_t \rangle
			\right ] 
\right\}  \\ 
&\leq
 \min_{\|\lambda\|_q \leq 5R_q}
\left \{
\sum_{t = 0}^N \alpha_t \left [ 
\varphi_\mu(\lambda_t) + \langle 
			\nabla \varphi_\mu (\lambda_t), \lambda - \lambda_t \rangle
			\right ] 
\right \} + 26 L R_q^2.
\end{align*}
Substituting expressions for $\varphi_\mu(\lambda_t)$ and $\nabla \varphi_\mu(\lambda_t)$:
\begin{align*}
\sum_{t = 0}^N \alpha_t &\left[ 
\varphi_\mu(\lambda_t) + 
			\langle 
			\nabla \varphi_\mu(\lambda_t), \lambda - \lambda_t \rangle
			\right] \\
			&= \sum_{t = 0}^N \alpha_t
\left[
\langle \lambda_t, b \rangle + U_\mu(x(\lambda_t)) - \langle \lambda_t, C x(\lambda_t) \rangle  + \langle b - C x(\lambda_t), \lambda - \lambda_t \rangle
\right] \\ 
& =   
\sum_{t = 0}^N \alpha_t
\left[
U_\mu(x(\lambda_t)) +
 \langle \lambda, b - C x(\lambda_t) \rangle
\right] \leq A_N \left[
U(\hat{x}_N) + \langle \lambda, b - C \hat{x}_N \rangle
\right], 
\end{align*}
Which finally leads to
\begin{align*}
A_N \varphi_\mu(y_N) & \leq A_N U_\mu(\hat{x}_N) + 26 L R_q^2 + A_N \min_{\|\lambda\|_q \leq 5R_q} \langle \lambda, b - C \hat{x}_N \rangle \\
& = A_N U_\mu(\hat{x}_N) + 26 L R_q^2 - 5 R_q A_N \|(C \hat{x}_N - b)_+\|_q.
\end{align*}

\end{proof}

\begin{theorem}
The point $\hat{x}_N$ ($\hat{x}_{N_\mu}$) obtained after $N$ ($N_\mu$) iterations of the Algorithm \ref{alg_fgm} satisfies the conditions
\begin{equation*}
\max_x U(x) - U(\hat{x}_N) \leq \varepsilon,\quad
  \|(C \hat{x}_N - b)_+\|_q \leq \frac{\varepsilon}{4R_q},
\end{equation*}
if the number of iterations satisfies the following inequality (the first for not strongly concave $u_i$, the second for $\mu$-strongly concave $u_i$):
\begin{equation}
N \geq \left \lfloor 8\sqrt{13} R_q R_p \cdot \frac{\|C\|_{p,q}}{\varepsilon} \right \rfloor,\quad
N_\mu \geq \left \lfloor 2\sqrt{26} \sqrt{R_q R_p} \cdot \frac{\|C\|_{p,q}}{\sqrt{\mu \varepsilon}} \right \rfloor    
\end{equation}
\end{theorem}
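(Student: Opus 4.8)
The plan is to turn the single composite estimate of Lemma~\ref{fgm_lemma} into two separate guarantees — one on the primal objective gap and one on the constraint residual — by pairing it with weak and strong duality for the smoothed problem, and then to read off the iteration counts by substituting the explicit values of $A_N$, $L$ and $\mu$. Throughout, let $U_\mu^* := \max_{x: Cx \le b} U_\mu(x)$, let $x^*$ be a maximizer of $U$ over $\{x: Cx \le b\}$ (so that $\max_x U(x) = U(x^*)$), let $\lambda_*$ be the optimal multiplier with $\|\lambda_*\|_q \le R_q$, and abbreviate the residual by $g := \|(C\hat x_N - b)_+\|_q$.

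First I would record two duality facts. Since every iterate satisfies $y_N \ge 0$ and $U_\mu(x) \le U_\mu(x) + \langle \lambda, b - Cx\rangle$ for feasible $x$ and $\lambda \ge 0$, weak duality gives $\varphi_\mu(y_N) \ge U_\mu^*$; strong duality under the Slater condition gives $\varphi_\mu(\lambda_*) = U_\mu^*$. The saddle inequality at $\lambda_*$ then yields, using $\lambda_* \ge 0$ and H\"older's inequality,
\[
U_\mu^* - U_\mu(\hat x_N) \ge \langle \lambda_*, b - C\hat x_N\rangle \ge -\langle \lambda_*, (C\hat x_N - b)_+\rangle \ge -R_q\, g .
\]

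Next I would extract the feasibility bound. Substituting $\varphi_\mu(y_N) \ge U_\mu^*$ into Lemma~\ref{fgm_lemma} gives $(U_\mu^* - U_\mu(\hat x_N)) + 5 R_q g \le 26 L R_q^2 / A_N$; combining this with the lower bound above cancels the objective gap and leaves $4 R_q g \le 26 L R_q^2 / A_N$, hence $g \le 13 L R_q /(2 A_N)$. For the objective I would instead drop the nonnegative term $5R_q g$, keeping $U_\mu^* - U_\mu(\hat x_N) \le 26 L R_q^2 / A_N$, and then undo the smoothing: $U(\hat x_N) \ge U_\mu(\hat x_N)$ and $U_\mu^* \ge U_\mu(x^*) \ge \max_x U(x) - \tfrac{\mu}{2} R_p^2$ together give
\[
\max_x U(x) - U(\hat x_N) \le \frac{26 L R_q^2}{A_N} + \frac{\mu}{2} R_p^2 .
\]
In the non-strongly-concave case the choice $\mu = \varepsilon / R_p^2$ turns the last term into $\varepsilon/2$; in the $\mu$-strongly-concave case no regularizer is added and that term is absent, so $U_\mu = U$ throughout.

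Finally I would substitute $A_N = (N+1)(N+2)/4 \ge N^2/4$ together with $L = \|C\|_{p,q}^2/\mu$, require the right-hand sides to meet the targets $\varepsilon$ and $\varepsilon/(4R_q)$, and solve the resulting quadratic inequalities in $N$; this produces thresholds of exactly the stated form, the feasibility requirement being dominated by the objective requirement, and carrying the numerical constants through precisely yields the factors $8\sqrt{13}$ and $2\sqrt{26}$. The main obstacle is handling the two coupled estimates simultaneously: because $\hat x_N$ is generally infeasible, the gap $U_\mu^* - U_\mu(\hat x_N)$ may be negative, so neither the objective nor the feasibility guarantee can be obtained in isolation — they must be disentangled through the multiplier bound $\|\lambda_*\|_q \le R_q$ — while at the same time tracking the constants cleanly across the smoothing reduction.
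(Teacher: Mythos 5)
Your proposal is correct and follows essentially the same route as the paper: weak duality plus Lemma~\ref{fgm_lemma} gives the objective bound, the saddle-point inequality with $\|\lambda_*\|_q \le R_q$ disentangles the constraint residual (yielding $4R_q\,g \le 26LR_q^2/A_N$, exactly the paper's intermediate estimates \eqref{function_error} and \eqref{constr_error}), and the smoothing choice $\mu = \varepsilon/R_p^2$ together with substitution of $A_N$ and $L$ gives the iteration counts. The only caveat is your claim that the constants come out ``exactly'': carrying them through with $A_N \ge N^2/4$ actually yields $4\sqrt{13}$ rather than $8\sqrt{13}$ (and $R_q$ rather than $\sqrt{R_q R_p}$ in the strongly concave case), but this looseness is present in the paper's own final step as well, and since any larger threshold remains sufficient the stated theorem still follows.
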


\begin{proof}
Due to weak duality, it holds that $\min_\lambda \varphi_\mu(\lambda) \geq \max_x U_\mu(x)$. Due to the fact that $\varphi_\mu(y_N) \geq \min_\lambda \varphi_\mu(\lambda) \geq \max_x U_\mu(x)$, the following estimate follows directly from Lemma \ref{fgm_lemma}:
\begin{equation}\label{function_error}
    \max_x U_\mu(x) - U_\mu(\hat{x}_N) \leq \frac{26 L R_q^2}{A_N}
\end{equation}
Also, by the properties of duality, we have
\begin{align*}
    \nonumber\max_x U_\mu(x) &\geq U_\mu(x) - \langle \lambda_*,
    (C x - b)_+ \rangle \\
    &\geq U_\mu(x) - R_q \|(C x - b)_+\|_q\quad \forall x \in \mathbb{R}^n_+
\end{align*}
where $\lambda_*$ is a minimum point of $\varphi_\mu$. Hence the following estimate follows:
\begin{align*}
    \varphi_\mu(y_N) -  U_\mu(\hat{x}_N) &= (\varphi_\mu(y_N) - \min_\lambda \varphi_\mu(\lambda)) + (\min_\lambda \varphi_\mu(\lambda) - \max_x U_\mu(x)) \\
    &+ (\max_x U_\mu(x) - U_\mu(\hat{x}_N)) \geq -R_q \|(C \hat{x}_N - b)_+\|_q.
\end{align*}
Using Lemma \ref{fgm_lemma} together with the obtained inequality:
\begin{equation}\label{constr_error}
    R_q \|(C \hat{x}_N - b)_+\|_q \leq \frac{13 L R_q^2}{2 A_N}
\end{equation}
Upper-bounding the right-hand side of \eqref{function_error} by $\frac{\varepsilon}{2}$ (or $\varepsilon$, if the $u_i$ are strongly concave) and taking into account this estimate in \eqref{constr_error}, we obtain the desired conditions from theorems and condition for $N$:
\[
\frac{104 L R_q^2}{(N+1)(N+2)} \leq \frac{\varepsilon}{2}
\]
Substituting $L$ from Lemma \ref{lipschitz} and solving for $N$, we obtain the inequalities from the theorem.
\end{proof}

\begin{lemma}
For a $\mu$-strongly concave function $U$ (possibly $\mu = 0$) in a particular case of $p = q = 2$ we have $N \sim \sqrt{nnz(C)}$, i.e.
\begin{equation*}
N = \min\left\{ \left \lfloor 8\sqrt{13} R_q R_p \cdot \frac{\lambda^{1/2}_{max}(C^\top C)}{\varepsilon} \right \rfloor, \left \lfloor 2\sqrt{26} \sqrt{R_q R_p} \cdot \frac{\lambda^{1/2}_{max}(C^\top C)}{\sqrt{\mu \varepsilon}} \right \rfloor \right\},
\end{equation*}
whereas in the case of $p = q = 1$ we have $N \sim \max nnz(C^\top_i)$, and more specifically
\begin{equation*}
N = \min\left\{ \left \lfloor 8\sqrt{13} R_q R_p \cdot \frac{\displaystyle \max_{i=1,...,n}\|C^\top_i\|_1}{\varepsilon} \right \rfloor, \left \lfloor 2\sqrt{26} \sqrt{R_q R_p} \cdot \frac{\displaystyle \max_{i=1,...,n}\|C^\top_i\|_1}{\sqrt{\mu \varepsilon}} \right \rfloor \right\}.
\end{equation*}
\end{lemma}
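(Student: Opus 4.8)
The plan is to obtain both special-case formulas directly from the two general iteration bounds established in the Theorem,
\[
N \ge \left\lfloor 8\sqrt{13}\,R_q R_p\,\frac{\|C\|_{p,q}}{\varepsilon}\right\rfloor,\qquad
N_\mu \ge \left\lfloor 2\sqrt{26}\,\sqrt{R_q R_p}\,\frac{\|C\|_{p,q}}{\sqrt{\mu\varepsilon}}\right\rfloor,
\]
by (i) explaining why the two bounds can be combined into a single minimum and (ii) evaluating the mixed norm $\|C\|_{p,q}$ for $p=q=2$ and $p=q=1$. Nothing beyond these two ingredients is needed: the statement is a specialization of the Theorem, not a new convergence argument.

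For (i), I would note that when $U$ is $\mu$-strongly concave both estimates are valid at once. The first is the non-strongly-concave branch: one introduces the smoothing $U_\mu$ and spends half the accuracy budget on regularization (the ``$\varepsilon/2$'' case in the Theorem's proof). The second is the strongly-concave branch: here $\varphi=\varphi_\mu$ already has a Lipschitz gradient without artificial smoothing, so the full budget $\varepsilon$ is available. A point produced after $\min\{N,N_\mu\}$ iterations therefore meets the stopping criterion in either reading, and taking the minimum yields exactly the asserted form. When $\mu=0$ the second expression is $+\infty$ (formally through $1/\sqrt{\mu\varepsilon}$), so the minimum reduces to the first bound, consistent with $U$ being merely concave.

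For (ii) with $p=q=2$, the mixed norm is the spectral norm $\|C\|_{2,2}=\max_{\|x\|_2=1}\|Cx\|_2=\lambda_{max}^{1/2}(C^\top C)$ (the supremum over $\|\lambda\|_2=1$ returns $\|Cx\|_2$ since $\ell_2$ is self-dual), the value already recorded after Lemma~\ref{lipschitz}; substituting it into both bounds gives the first displayed formula. To justify the accompanying reading $N\sim\sqrt{nnz(C)}$ I would add the trace estimate $\lambda_{max}(C^\top C)\le \mathrm{tr}(C^\top C)=\sum_{i=1}^n\|C_i^\top\|_2^2=nnz(C)$, using $C_{ji}\in\{0,1\}$, whence $\lambda_{max}^{1/2}(C^\top C)\le\sqrt{nnz(C)}$. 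For $p=q=1$ the relevant quantity is the operator norm $\|C\|_{1,1}=\max_{\|x\|_1=1}\|Cx\|_1$; by the standard characterization of the $\ell_1\to\ell_1$ norm the maximum is attained at a vertex $x=e_i$, so $\|C\|_{1,1}=\max_{i}\|Ce_i\|_1=\max_i\|C_i^\top\|_1$, which for a $0/1$ matrix equals $\max_i nnz(C_i^\top)$. Substituting gives the second formula and its reading $N\sim\max nnz(C_i^\top)$.

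The main obstacle I anticipate is purely bookkeeping: being careful about which norm is paired with $\lambda$ in the definition of $\|C\|_{p,q}$. Taken literally, $\max_{\|\lambda\|_q=1}\langle\lambda,Cx\rangle=\|Cx\|_{q^*}$, which for $q=1$ collapses to $\|Cx\|_\infty$ and returns the useless value $1$; the expression consistent with the Lipschitz estimate of $\nabla\varphi_\mu$ measured in $\|\cdot\|_q$ is the operator norm into $\ell_q$, i.e. the supremum over the unit ball of the dual norm $\|\cdot\|_{q^*}$ of $\lambda$. Once this identification is fixed (it is automatic for $q=2$), the remainder is routine substitution into the two bounds of the Theorem.
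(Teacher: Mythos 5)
Your proof is correct and is essentially the argument the paper intends: the lemma appears there with no proof at all, being an immediate specialization of the Theorem obtained by substituting the value of $\|C\|_{p,q}$ for each choice of norms and taking the minimum of the two iteration bounds (the strongly concave branch being vacuous, formally infinite, when $\mu = 0$), which is exactly what you do. Your two supporting observations are sound and go slightly beyond what the paper records: the trace bound $\lambda_{max}(C^\top C) \leq \mathrm{tr}(C^\top C) = nnz(C)$ justifies the reading $N \sim \sqrt{nnz(C)}$ and is consistent with the paper's Lemma 2 (which derives $L = nnz(C)/\mu$ directly), while your remark about $q=1$ flags a genuine inconsistency in the paper --- taken literally, its definition gives $\|C\|_{1,1} = \max_{\|x\|_1=1}\|Cx\|_\infty = 1$ for any nonzero $0/1$ matrix, and only the $\ell_1 \to \ell_1$ operator-norm reading $\max_{i=1,\dots,n}\|C_i^\top\|_1$ produces the formula actually displayed in the lemma.
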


Let us clarify the question of the choice of norms for the problem under consideration. For the setting under consideration, it turns out to be reasonable to preserve freedom only in the choice of $p \in [1, 2]$, while for the dual problem, due to its structure, as a result of the trade of between a decrease in the Lipschitz constant and an increase in $R$, the most effective choice is $q=2$ \cite{gasnikov2016stoch}. As you can see from the arguments above, another choice of $q$ also does not bring any benefit in term of dependence on the properties of the matrix $C$.

\subsection{Interpretation}
Let us divide the optimizer's responsibility for the primal and dual variables into two natural types of computing agents: connections and vertices. By global process iterations we mean the steps during which each computational agent performs all updates corresponding to the iteration of the Algorithm \ref{alg_fgm} for the corresponding component of the vectors appearing there. Let us show that the described algorithm can be implemented in a real network architecture at reasonable and insignificant costs of communication between computing agents.

So, let us first consider a procedure performed on some connection $j$. In the vector $y_t$ it corresponds to the component $[y_t]_j$, updated based on its corresponding previous value $[\lambda_{t-1}]_j$ and some information about $x(\lambda_t)$. We assume that at the iteration $t$ real data transmission rate $x_i$ for each of the vertices $i$ coincides with $x_i(\lambda_t)$ (the procedures described in this section guarantee this). Note now that when multiplying $x$ by $C_j$ the value of the product will be affected only by those $x_i$ for which $C_{j i} = 1$, that is, the vertices directly in relation to the connection $j$. It is natural then to assume that before updating $[y_t]_j$ the connection $j$ polls all the vertices in relation to it for the values of $x_i$. $[z_t]_j$ and $[\lambda_{t+1}]_j$ components are updated in the same way. Since all connections perform these procedures in parallel, we can assume that in time independent of the dimension of the problem, all components of the vectors $y_t$, $z_t$ and $\lambda_{t+1}$ will be calculated (they will be stored, of course, also distributed).

Now let us move on to considering the procedure performed by the vertex $i$. Its goal is to calculate the optimal data transmission rate $[\hat{x}_{t+1}]_i$. Note that this value can be obtained, if $x_i(\lambda_{t+1})$ is known, by a simple update in constant time and without additional communications. At the same time, it is easy to see that to solve the auxiliary problem and obtain $x_i(\lambda_{t+1})$ one should know only those components $[\lambda_{t+1}]_j$ for which $C_{j i} = 1$ (due to multiplication by $C_i^\top$). This means that a vertex, before executing the procedure, needs to poll all connections that are in relation to it for the values of $[\lambda_{t+1}]_j$. On the other hand, here it is computationally possible to manage such communication protocol, wherein each connection, calculating a value of $[\lambda_{t+1}]_j$, notifies all related nodes about it, while each node collects incoming messages until it receives up-to-date information from all related connection, and after performing the procedure in a symmetric manner will notify all associated connections about new values of $x_i(\lambda_{t+1})$. 

\section{Randomized mirror descent} \label{section_mirror}
\subsection{Theoretical guarantees}

    \begin{algorithm}[H]
		\caption{Stochastic Mirror Descent}\label{alg_mirror}
		\begin{algorithmic}[1]
			
			\Require $x_0$.
			
			\State $I = \varnothing, J = \varnothing$

			\For{$t=0,\, 1, \, \ldots, \, N-1$}
                \If{$C x_t - b \leq \varepsilon$}
    			
    			\State $i \sim \mathcal{U}\{1, ..., n\}$
    			\State $[x_{t+1}]_i = \left[ 
    			    [x_t]_i - \frac{ \varepsilon n}{M_U^2} \nabla u_i([x_t]_i)
                \right]_+$
                \State $I = I \cup \{t+1\}$
                
                \Else
                
                \State $j_t = \arg \max_{j=1,...,m} C_j x_t - b_j$
                \State $i \sim \mathcal{U} \{i: C_{j_t i} = 1\}$
    			\State $[x_{t+1}]_i = \left[ 
    			    [x_t]_i - \frac{ \varepsilon n}{\max_{j=1,...,m}\|C_j\|_{p*}^2} \right]_+$
                \State $J = J \cup \{t+1\}$
                
                \EndIf
            \EndFor
			\State\Return $\hat{x}_N = \frac{1}{|I|} \sum_{t \in I} x_t$
		\end{algorithmic}
	\end{algorithm}
	
In Algorithm \ref{alg_mirror} the stochastic oracles of the gradient of the function $U$ and the gradient of constraints $C x - b$ are used to perform a step of the method, namely, randomized along the vertex $i$, corresponding to the selected term $u_i(x_t)$ and the component of the updated vector $x_t$:
\[
\mathbb{E}_i[e_i \cdot n \nabla u_i([x_t]_i)] = \nabla U(x_t),\qquad
\mathbb{E}_i[e_i \cdot n C_{j_t}] = \nabla (C_{j_t} x_t - b_{j_t}).
\]

We assume that the randomized gradient of $U$ used is bounded:
\begin{equation}\label{u_assump}
|\nabla u_i(x)| \leq M_U.
\end{equation}

Below is a direct consequence of the result on the convergence of the method obtained in \cite{tiapkin2021parallel}.

\begin{theorem}{Theorem 2 \cite{tiapkin2021parallel}}
Point $\hat{x}_N$, obtained after $N$ iterations of the Algorithm \ref{alg_mirror}, and $\hat{\lambda}$, chosen so that $[\hat{\lambda}]_j = \frac{1}{|I|} \frac{M_U^2}{\max_{j=1,...,m}\|C_j\|^2_{p*}} \sum_{t \in J} \mathbbm{1}[j = j_t]$, satisfy the conditions
\[
\mathbb{E}[\max_x U(x) - U(\hat{x})] \leq \mathbb{E}[\varphi(\hat{\lambda}) - U(\hat{x})] \leq \varepsilon,\quad
C x_t - b \leq \varepsilon,
\]
if the number of iterations satisfies the following inequality:
\[
N \geq \left \lceil \frac{72 \max\{M_U, \max_{j=1,...,m}\|C_j\|_{p*}\}^2 n^2 R_p^2}{\varepsilon^2} \right \rceil
\]
\end{theorem}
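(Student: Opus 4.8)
The plan is to derive the statement as a specialization of the switching stochastic mirror descent analysis, so the first task is to cast problem \eqref{primal} into the abstract template to which Theorem 2 of \cite{tiapkin2021parallel} applies. Writing $f(x) = -U(x)$ for the (convex) objective and $g(x) = \max_{j=1,\dots,m}(C_j x - b_j)$ for the scalar constraint function, the feasible set $\{Cx \le b\}$ becomes $\{g(x) \le 0\}$, and Algorithm~\ref{alg_mirror} is exactly the switching scheme: a \emph{productive} step (index set $I$) uses a stochastic subgradient of $f$ whenever $g(x_t) \le \varepsilon$, and a \emph{non-productive} step (index set $J$) uses a stochastic subgradient of the most violated constraint otherwise. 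I would first verify that the two randomized oracles are unbiased --- which is precisely the pair of identities stated just before the theorem --- and then bound their second moments: for the objective $\mathbb{E}_i\|e_i\, n\,\nabla u_i([x_t]_i)\|_{p*}^2 = n\sum_i |\nabla u_i([x_t]_i)|^2 \le n^2 M_U^2$ using \eqref{u_assump} and $\|e_i\|_{p*}=1$, and for the constraint step the analogous quantity is of order $n^2 \max_{j}\|C_j\|_{p*}^2$. The common second-moment bound is therefore $M^2 = n^2 \max\{M_U, \max_j\|C_j\|_{p*}\}^2$, which is exactly what produces the $n^2 M^2$ numerator in the iteration count.

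The second step is the mirror-descent accounting itself. For every step I would invoke the standard three-point inequality $h\langle \hat\nabla_t, x_t - x\rangle \le V(x,x_t) - V(x,x_{t+1}) + \tfrac{h^2}{2}\|\hat\nabla_t\|_{p*}^2$, take conditional expectation to replace each stochastic subgradient by the true (sub)gradient, and telescope over $t = 0,\dots,N-1$ at the comparator $x = x_*$. Splitting the left-hand side along $I$ and $J$: on productive steps concavity of $U$ gives $\langle \nabla U(x_t), x_* - x_t\rangle \ge U(x_*) - U(x_t)$, while on non-productive steps the test $g(x_t) > \varepsilon$ together with feasibility of $x_*$ gives $\langle \nabla g(x_t), x_t - x_*\rangle \ge g(x_t) > \varepsilon$. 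With the prescribed step sizes this yields, after dividing by $|I|$ and using $V(x_*,x_0) \le R_p^2$, a bound of the form $\mathbb{E}[\max_x U(x) - U(\hat x_N)] \le \varepsilon$; the iteration count in the statement is precisely the threshold at which the telescoped $R_p^2/(\text{step}\cdot|I|)$ term and the $\text{step}\cdot M^2$ term balance.

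The third step upgrades the objective bound to the primal--dual gap $\mathbb{E}[\varphi(\hat\lambda) - U(\hat x_N)] \le \varepsilon$ and certifies approximate feasibility. Here I would use the explicit multiplier $[\hat\lambda]_j$, which tallies how often constraint $j$ was the maximally violated one over non-productive steps; the non-productive contributions in the telescoped sum reassemble into $\langle \hat\lambda, C\hat x_N - b\rangle$, so that the objective gap plus this term reproduce $\varphi(\hat\lambda) - U(\hat x_N)$, giving the sharper middle inequality, while the outer inequality $\max_x U(x) \le \varphi(\hat\lambda)$ is just weak duality. The feasibility bound $C x_t - b \le \varepsilon$ holds on the averaged output by construction, since $\hat x_N$ averages only the productive points, each of which passed the test $g(x_t)\le\varepsilon$, and convexity of $g$ propagates the bound to the average.

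The main obstacle I anticipate is not the deterministic skeleton but the stochastic bookkeeping: one must argue that taking expectations commutes correctly with the \emph{data-dependent} partition into $I$ and $J$, since these index sets are themselves random, and that the randomized coordinate rule for the constraint step --- sampling $i$ only among $\{i: C_{j_t i}=1\}$ rather than uniformly over all coordinates --- still yields an unbiased subgradient with the claimed second moment. Showing that the expected size of $|I|$ is a constant fraction of $N$, so that the division by $|I|$ is legitimate, is the delicate point; this is exactly what is packaged inside Theorem~2 of \cite{tiapkin2021parallel}, so the cleanest route is to complete the reduction above and then invoke that result rather than reprove the switching analysis from scratch.
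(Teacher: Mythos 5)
Your proposal is correct and ultimately takes the same route as the paper: the paper offers no internal proof at all, stating the result as a direct consequence of Theorem~2 of \cite{tiapkin2021parallel}, which is exactly the reduction-plus-citation you converge on. Your additional sketch of the switching mirror-descent machinery (unbiased oracles, second-moment bounds of order $n^2\max\{M_U,\max_j\|C_j\|_{p*}\}^2$, telescoping, and feasibility of the average of productive iterates) is consistent with that cited analysis, and you correctly flag the one subtlety the paper glosses over, namely that the constraint-step sampling over $\{i: C_{j_t i}=1\}$ requires care to keep the oracle unbiased with the claimed scaling.
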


Note that when choosing the norm $p=1$, we have $\|C_j\|_{p*} = \max_{i=1,...,n} C_{ji} \leq 1$, and therefore the estimate ceases depend on the characteristics of the matrix $C$. 

One of the few problems with the presented approach is that the specific functions $u$ used may not satisfy the assumption \eqref{u_assump}. Note, however, that this problem can be solved~--- for this one can use adaptive versions of mirror descent, similar, for example, to those described in \cite{bayandinaadaptive}. Thus, it is possible to obtain estimates similar to those presented above, but including, instead of $M_U$, the constants of the form $(\frac{1}{N} \sum_{t=1}^N M_t^2)^{1/2}$, where $M_t$ is the adaptively selected constant value at iteration $t$.

\subsection{Interpretation}

Let us analyze, similarly to the previous considered method, the computational and communication protocol corresponding to the Algorithm \ref{alg_mirror}. In this case, direct duality arises in the described approach not constructively, but only theoretically. This means that in order to calculate the prices of information transmission for each of the connections, it is not necessary to know the values $x_i$. At the same time, for calculating prices, it becomes necessary to know some data about the structure of the network and the problem, as well as additional information that appears in the course of global iterations of the method. In addition, the very structure of descent with switches requires verification of the fulfillment of all constraints of the problem, which requires information about the operation of all connections (or about the values of the data transmission rates for all vertices, the decision on the preferred verification method is made only for reasons of communication complexity). This gives rise to the need to introduce some kind of decision-making center that aggregates information about the operation of the network and provides it upon request to individual computing agents.

Let's describe the procedures performed by the decision center. At the beginning of the iteration, the center polls the connections or vertices for the fulfillment of the problem's constraints, after which it decides on the type of iteration ($t \in I$ or $t \in J$). If it turns out that some of the connections are overloaded, then the most overloaded of them is selected, which has the number $j_t$. This completes the center procedure, it is repeated only after one of the vertices notifies it of the completed update of its component $[x_{t+1}]_i$: along with this, the center can also notify with the values $|I|$ and $\{j_t\}_{t=0}^N$ all the connections in the network to allow them to calculate their own prices~--- this, however, is not necessary when the goal is only to find the optimal data transmission rates.

Now let's move on to the procedure performed on the vertex $i$. At the beginning of a local iteration (which is automatically launched every fixed period of time), the vertex appeals the center to obtain information about the iteration type and value $j_t$. If the vertex $i$ and the connection $j_t$ are in a relation or the iteration type matches $t \in I$, the vertex accordingly updates the value of the data transmission rate $[x_{t+1}]_i$ and notifies the center (only about the fact of the update, transfer of the new value is not required). If we assume that the number of the vertex $i$ that first requested information from the center is uniform distributed, the resulting protocol will is correspond to the analyzed algorithm.

Note also that the center does not need to poll all connections for overloading every time, since when only one component $[x_{t+1}]_i$ is updated, the vertex can inform the center not only about the fact of overloading, but also about the new value of this components, and then the center can update the values of the constraints more efficiently, without having to go to all the connections of the network every time.

\section{Numerical experiments} \label{section_num}
Let's move on to the description of numerical experiments. The results presented below were obtained on a macOS 10.15.7 PC running a 3 GHz 6-core Intel Core i5 processor, with a Python 3.8.2 interpreter in a Jupyter Notebook environment, calculations were performed on variables of numpy.float64 type. The source code for setting up experiments is available at  \url{https://github.com/dmivilensky/Network-utility-maximization-2021}.

\begin{figure}[H]
	\centering
	\vspace{-0.6cm}
	\includegraphics[width=0.99\textwidth]{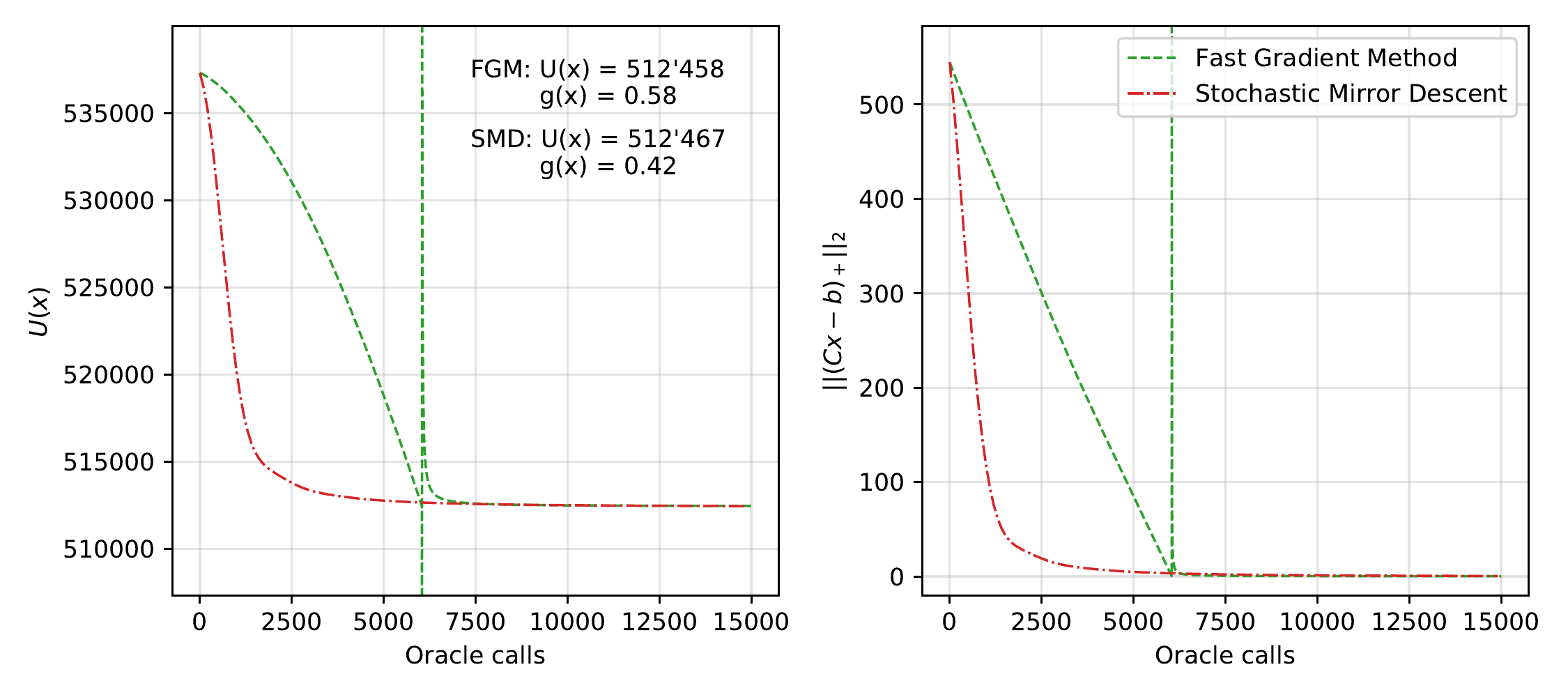}
	\vspace{-0.4cm}
	\caption{Algorithm \ref{alg_fgm} and Algorithm \ref{alg_mirror} comparison}
	\label{fig:comparison}
\end{figure}

Figure \ref{fig:comparison} shows the dynamics of the values of the total utility function and the constraint residual for the sequences of data transmission rates generated by Algorithm \ref{alg_fgm} and Algorithm \ref{alg_mirror} in the course of solving problem \eqref{primal}. For this experiment, we consider a particular formulation of the problem with $p=q=2$, $m=40, n=100$, matrix $C$ with i.i.d. drawn components, such that $nnz(C) \approx 0.001 \cdot m \cdot n$, vector $b$ with i.i.d. components drawn from uniform distribution $\mathcal{U}[0, n]$. We consider utility functions $u_i$ of the form
\[
u_i(x) = a_i \cdot x - \frac{\sigma \cdot n}{2} \cdot x^2,
\]
where $a$ is vector with i.i.d. components drawn from uniform distribution $\mathcal{U}[1, 50]$, $\sigma = 0.001$. Hence, closed form solution of \eqref{greedy} is
\[
x_i(\lambda) = \frac{\left(a - C^\top \cdot \lambda\right)_+}{n \cdot \sigma}.
\]

As we can see from the presented figure, there is a distinct moment of the first switching of Algorithm~\ref{alg_mirror} from ``constraint correction mode'' to the ``combined mode', in which the utility function is also optimized. The abscissa axis of the graphs measures the number of one component oracle calls. In this experiment, it can be seen that Algorithm~\ref{alg_mirror} obtains a slightly better solution than Algorithm~\ref{alg_fgm}, using the same number of calls to individual components. The real working time of Algorithm~\ref{alg_mirror} in the simulation exceeds that for Algorithm~\ref{alg_fgm}, however, in a real-life environment of a computer network, the main contribution during operation is made by delays in waiting for responses from network participants, which is more objectively reflected by the number of oracle calls. 

Optimizing the constraints of the problem at the first stage of the operation of Algorithm~\ref{alg_mirror} can take a long time, determined only by the properties of the problem (that is, arbitrarily long). At the same time, if there is a point in the feasible set, for example, describing the present empirical distribution of data transmission rates in the network, Algorithm~\ref{alg_mirror} immediately starts in main mode. In any case, both proposed methods turn out to be competitive alternatives.

\section*{Conclusion}

Following the approach of updating individual data transmission rates and pricing network connections, we propose two methods to maximize the utility of the network. The first is obtained as a generalization of the fast gradient method to the case of not strongly concave utility functions, using the dual smoothing technique. The second is an adaptation of switched stochastic mirror descent for constrained problems. For both proposed methods, a detailed description of the protocols of their operation in the environment of a real-life computer network was presented, taking into account distributed data storage and limited communication capabilities. Numerical experiments on synthetic architectures of computer networks allow us to compare the practical efficiency of the proposed algorithms.

\bibliographystyle{splncs04}
\bibliography{main}
\end{document}